\newtheorem{Lem}{Lemma}
\newtheorem{theorem}{Theorem}
\numberwithin{equation}{section}
\begin{document}
\title[Error in the zero-counting formulae]{An improved upper bound for the error in the zero-counting formulae for Dirichlet $L$-functions and Dedekind zeta-functions}
\author{T. S. Trudgian}
\address{Mathematical Sciences Institute, The Australian National University, Canberra, Australia, 0200}
\email{timothy.trudgian@anu.edu.au}
\thanks{Supported by Australian Research Council DECRA Grant DE120100173.}

\subjclass[2010]{Primary 11M06; Secondary 11M26, 11R42}

\date{9 October, 2013.}

\keywords{Zero-counting formula, Dirichlet $L$-functions, Dedekind zeta-functions}

\begin{abstract}
This paper contains new explicit upper bounds for the number of zeroes of Dirichlet $L$-functions and Dedekind zeta-functions in rectangles.
\end{abstract}

\maketitle

\section{Introduction and Results}
This paper pertains to the functions $N(T, \chi)$ and $N_{K}(T)$, respectively the number of zeroes $\rho = \beta + i\gamma$ of $L(s, \chi)$ and of $\zeta_{K}(s)$ in the region $0<\beta <1$ and $|\gamma|\leq T$. The purpose of this paper is to prove the following two theorems.

\begin{theorem}\label{McCurleyThm}
Let $T\geq 1$ and $\chi$ be a primitive nonprincipal character modulo $k$. Then
\begin{equation}\label{Thm11}
\bigg|N(T, \chi) - \frac{T}{\pi}\log\frac{kT}{2\pi e}\bigg| \leq 0.317\log kT + 6.401.
\end{equation}
In addition, if the right side of (\ref{Thm11}) is written as $C_{1}\log kT + C_{2}$, one may use the values of $C_{1}$ and $C_{2}$ contained in Table \ref{PrTable}.

\end{theorem}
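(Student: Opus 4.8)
The plan is to run the classical argument-principle computation of Backlund and Rosser, adapted to $L$-functions and with every constant made explicit. Write $\xi(s,\chi)=(k/\pi)^{(s+\mathfrak{a})/2}\Gamma((s+\mathfrak{a})/2)L(s,\chi)$, where $\mathfrak{a}\in\{0,1\}$ records the parity of $\chi$; since $\chi$ is primitive and nonprincipal this function is entire of order $1$, its zeros are exactly the nontrivial zeros of $L(s,\chi)$, and it satisfies $\xi(s,\chi)=\varepsilon(\chi)\xi(1-s,\overline{\chi})$ with $|\varepsilon(\chi)|=1$ together with $\overline{\xi(\overline{s},\chi)}=\xi(s,\overline{\chi})$. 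I would then set $\Phi(s)=\xi(s,\chi)\xi(s,\overline{\chi})$; it inherits $\Phi(s)=\Phi(1-s)$ and $\overline{\Phi(\overline{s})}=\Phi(s)$, its zero count in $|\gamma|\le T$, $0<\beta<1$ equals $N(T,\chi)+N(T,\overline{\chi})=2N(T,\chi)$ (the zeros of $L(s,\overline{\chi})$ being the conjugates of those of $L(s,\chi)$), and by the argument principle together with the resulting four-fold symmetry of $\Phi$,
\begin{equation*}
\pi\,N(T,\chi)=\Delta_{\mathcal{C}}\arg\Phi(s),
\end{equation*}
where $\mathcal{C}$ runs up the line $\Re s=\sigma_{0}$ from $\sigma_{0}$ to $\sigma_{0}+iT$ and then left along $\Im s=T$ to $\tfrac{1}{2}+iT$, for a fixed $\sigma_{0}>1$ to be chosen, the argument being continued from its value $0$ at $s=\sigma_{0}$. (One first assumes $T$ is not the ordinate of a zero and passes to the limit afterwards.)

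Next I would peel off the main term. Along $\mathcal{C}$ the factor $(k/\pi)^{s+\mathfrak{a}}$ contributes $T\log(k/\pi)$ to the argument, and the factor $\Gamma((s+\mathfrak{a})/2)^{2}$ contributes, by Stirling's formula with an explicit remainder, $T\log(T/2)-T$ plus a bounded error uniform in $\mathfrak{a}$; these combine to give $T\log\frac{kT}{2\pi e}$ exactly. Hence
\begin{equation*}
N(T,\chi)-\frac{T}{\pi}\log\frac{kT}{2\pi e}=\frac{1}{\pi}\Bigl(E+\Delta_{\mathcal{C}}\arg L(s,\chi)+\Delta_{\mathcal{C}}\arg L(s,\overline{\chi})\Bigr),
\end{equation*}
with $E$ an explicit bounded Stirling-type term. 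On the vertical part of $\mathcal{C}$ the estimate is immediate: for $\Re s=\sigma_{0}>1$ one has $\log L(s,\chi)=\sum_{p}\sum_{m\ge1}\chi(p^{m})/(m p^{ms})$, so $\Re L(\sigma_{0}+it,\chi)\ge 2-\zeta(\sigma_{0})$, which is positive once $\sigma_{0}$ is taken large enough (say $\sigma_{0}\ge 1.8$), and therefore the variation of $\arg L(\cdot,\chi)$ there is at most $\log\zeta(\sigma_{0})$ in absolute value; likewise for $\overline{\chi}$.

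The crux is the variation of $\arg L(s,\chi)$ along the horizontal segment from $\sigma_{0}+iT$ to $\tfrac{1}{2}+iT$. For this I would invoke Backlund's device: this variation is controlled by the number of sign changes of $\Re L(\sigma+iT,\chi)$ for $\tfrac{1}{2}\le\sigma\le\sigma_{0}$, and, since $\Re L(\sigma+iT,\chi)=\tfrac{1}{2}\bigl(L(\sigma+iT,\chi)+L(\sigma-iT,\overline{\chi})\bigr)$ is the restriction to the real axis of the entire function $G(z)=\tfrac{1}{2}\bigl(L(z+iT,\chi)+L(z-iT,\overline{\chi})\bigr)$, this number is bounded through Jensen's inequality (or the Borel--Carath\'eodory theorem) by the logarithm of the ratio of $\max_{|z-c|=\rho_{2}}|G(z)|$ to $|G(c)|$, for a suitable real centre $c$ and radius $\rho_{2}$. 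The denominator is bounded below by $2-\zeta(\Re c)$ provided $\Re c$ is not too small, while the numerator calls for an explicit convexity (Phragm\'en--Lindel\"of) bound $|L(\sigma+it,\chi)|\le A\,(k(|t|+2))^{B}$ valid throughout a region containing the disk and reaching a little to the left of the critical line, with $A$ and $B$ depending on the leftmost abscissa reached. This yields the number of sign changes --- and hence the horizontal argument variation --- in the form $c_{1}\log kT+O(1)$, and the coefficient of $\log kT$ in the theorem is twice this $c_{1}$ (one $c_{1}$ from each of $\chi$ and $\overline{\chi}$).

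The remaining task is the optimisation. One selects $\sigma_{0}$, the centre $c$ and the two radii of the Jensen disk, and the parameter in the convexity bound so as to minimise the resulting coefficient of $\log kT$ --- balancing a small inner radius, which keeps the disk only just past the critical line and so keeps the relevant bound for $|L|$ small, against a large ratio of radii and a comfortably positive lower bound for $|G(c)|$ --- and then adds up every explicit $O(1)$ contribution (the Stirling term $E$, the two $\log\zeta(\sigma_{0})$ terms, the constants in the convexity bound and in Backlund's estimate, and a separate direct verification for $1\le T\le T_{0}$) to obtain $C_{1}=0.317$, $C_{2}=6.401$, and the further rows of Table~\ref{PrTable}. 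I expect this optimisation, together with securing a sufficiently sharp explicit bound for $|L(s,\chi)|$ near $\Re s=\tfrac{1}{2}$, to be the main obstacle; the gain over McCurley's earlier treatment should come precisely from a sharper such bound and a more carefully tuned version of Backlund's lemma.
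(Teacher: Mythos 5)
Your outline --- argument principle applied to a completed $L$-function, Stirling's formula for the $\Gamma$-factor, a trivial estimate on the vertical segment, and Jensen's formula controlling the number of sign changes of $\Re L(\sigma+iT,\chi)$ on the horizontal segment --- matches the skeleton of the paper's proof, and working with $\Phi(s)=\xi(s,\chi)\xi(s,\overline{\chi})$ on a quarter-contour rather than using $L(\overline{s},\chi)=\overline{L(s,\overline{\chi})}$ to identify the two horizontal edges is a harmless reformulation. But as written the proposal would reproduce McCurley's constant, not the theorem's.

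The halving of $C_{1}$ (from $\approx 0.689$ to $0.317$ at $\eta=\tfrac14$) comes from the zero-pairing version of Backlund's trick in Lemma~\ref{BL}, and you have not identified this mechanism. The point is that the functional equation of $\xi$ forces $|\Delta_{+}\arg L+\Delta_{-}\arg L|$ to be small, so that to each sign change of $\Re L(\sigma+iT,\chi)^{N}$ at an abscissa $a_{\nu}\ge\tfrac12$ there corresponds a sign change at some $a_{\nu}'\le\tfrac12$ with $a_{\nu}\ge 1-a_{\nu}'$. The Jensen disk centred at $1+\eta$ then captures essentially twice as many zeros, and the pairing gives $|1+\eta-a_{\nu}|\,|1+\eta-a_{\nu}'|\le(\tfrac12+\eta)^{2}$ in place of the unpaired bound $\tfrac12+\eta$; the nonstandard choice $\sigma_{1}=\tfrac12+\sqrt{2}(\eta+\tfrac12)$ (versus McCurley's $\tfrac32+2\eta$) drops out of balancing this against the bound for zeros lying to the right of $1+\eta$. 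You conjecture that the gain will come instead from ``a sharper explicit bound for $|L(s,\chi)|$ near $\Re s=\tfrac12$'', but the paper uses the same Rademacher convexity input that McCurley did; the introduction even flags improved $|L(1+it,\chi)|$ bounds as a hoped-for \emph{future} refinement, not the one realised here. Without Lemma~\ref{BL} your Jensen step yields only the one-sided count, and the coefficient of $\log kT$ stays where McCurley left it.

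A secondary issue: applying Jensen directly to $G(z)=\tfrac12\bigl(L(z+iT,\chi)+L(z-iT,\overline{\chi})\bigr)$ risks $G(c)=0$ at the chosen centre. The paper instead takes $f(s)=\tfrac12\{L(s+iT,\chi)^{N}+L(s-iT,\overline{\chi})^{N}\}$ and invokes Rosser's observation: choose a sequence of integers $N\to\infty$ along which $N\arg L(1+\eta+iT,\chi)\to 0\pmod{2\pi}$, so that $f(1+\eta)/|L(1+\eta+iT,\chi)|^{N}\to 1$; the extra factor of $N$ cancels when the zero count is converted back into a change of argument, and the limit produces the stated constants.
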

\begin{theorem}\label{ZetaThm}
Let $T\geq 1$ and $K$ be a number field with degree $n_{K} = [K: \mathbb{Q}]$ and absolute discriminant $d_{K}$. Then
\begin{equation}\label{Thm12}
\bigg|N_{K}(T) - \frac{T}{\pi}\log\left\{ d_{K} \left( \frac{T}{2\pi e}\right)^{n_{K}}\right\}\bigg|\leq 0.317\left\{ \log d_{K} + n_{K}\log T\right\} + 6.333n_{K} + 3.482.
\end{equation}
In addition, if the right side of (\ref{Thm12}) is written as $D_{1}\left\{ \log d_{K} + n_{K}\log T\right\} + D_{2} n_{K} + D_{3}$, one may use the values of $D_{1}, D_{2}$ and $D_{3}$ contained in Table \ref{PrTable2}.
\end{theorem}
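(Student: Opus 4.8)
The plan is to establish both theorems by making the classical contour-integration argument fully explicit, and I would begin with Theorem~\ref{McCurleyThm}. Fix $T\ge 1$ not equal to the ordinate of a zero (the general case then follows by a limiting argument), let $a\in\{0,1\}$ record the parity of $\chi$, and introduce the completed function $\xi(s,\chi)=(k/\pi)^{(s+a)/2}\Gamma\big(\tfrac{s+a}{2}\big)L(s,\chi)$, which is entire, non-vanishing for $\Re s>1$, and whose zeros in $0<\Re s<1$ coincide with those of $L(s,\chi)$. Applying the argument principle to $\xi(s,\chi)$ around the rectangle with vertices $b\pm iT$ and $(1-b)\pm iT$ for a parameter $b>1$ to be chosen, and exploiting the reflection $\overline{\xi(s,\chi)}=\xi(\bar s,\bar\chi)$ together with the functional equation, I would reduce the count to
\[
\pi\,N(T,\chi)=\Delta_{\mathcal C}\arg\Big[(k/\pi)^{(s+a)/2}\Gamma\big(\tfrac{s+a}{2}\big)\Big]+\Delta_{\mathcal C}\arg L(s,\chi),
\]
where $\mathcal C$ is the relevant (half-)portion of the boundary, namely the broken line from $b-iT$ up to $b+iT$ and across to $\tfrac12+iT$, together with its conjugate. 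An explicit form of Stirling's formula, with a remainder bounded uniformly for $T\ge1$, turns the gamma contribution into $\frac{T}{\pi}\log\frac{kT}{2\pi e}$ plus a small absolutely bounded error; this is routine but must be carried out carefully to keep the constants sharp.

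The substance lies in bounding $\Delta_{\mathcal C}\arg L(s,\chi)$ along the horizontal segments $\{\sigma+iT:\tfrac12\le\sigma\le b\}$ and their mirror images. Here I would invoke Backlund's device: the change in $\arg L(\sigma+iT,\chi)$ as $\sigma$ runs from $b$ to $\tfrac12$ is at most $\pi$ times one more than the number of zeros of $\Re L(\sigma+iT,\chi)$ in that range. Since $\overline{L(s,\chi)}=L(\bar s,\bar\chi)$, the function $f(s)=\tfrac12\big(L(s+iT,\chi)+L(s-iT,\bar\chi)\big)$ is analytic and equals $\Re L(\sigma+iT,\chi)$ on the real axis, so those zeros are real zeros of $f$, which I would count by Jensen's formula on the disk centred at $s=b$ passing through $s=\tfrac12$, enlarged to a concentric disk of radius $R$. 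This gives a bound of the shape $\dfrac{\log\big(M/|f(b)|\big)}{\log\big(R/(b-\tfrac12)\big)}$, where $|f(b)|$ is bounded below by an Euler-product estimate such as $\zeta(2b)/\zeta(b)$ minus a small tail, and $M$ is an explicit majorant for $|L(s,\chi)|$ on the larger disk. Producing a clean $M$ — necessarily of the form $c\,(k(|t|+2))^{\mu}$ with $\mu$ as small as possible — via the Phragmén–Lindelöf convexity bound, using the functional equation to control the part of the disk with $\Re s<0$, is the technical crux; squeezing the resulting numerical constant down to $0.317$ is precisely where the improvement over McCurley's treatment must come, presumably through a sharper form of Backlund's inequality (e.g.\ replacing the $+1$ by $+\tfrac12$ via endpoint information) together with an optimal choice of $b$ and $R$. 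Combining the gamma term, the two argument bounds and the Stirling error, then optimising numerically, yields \eqref{Thm11} and the entries of Table~\ref{PrTable}.

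For Theorem~\ref{ZetaThm} I would run the identical scheme with $\xi_K(s)=|d_K|^{s/2}\big(\pi^{-s/2}\Gamma(s/2)\big)^{r_1}\big((2\pi)^{-s}\Gamma(s)\big)^{r_2}\zeta_K(s)$, where $r_1+2r_2=n_K$. Two adjustments are needed: $\zeta_K$ has a simple pole at $s=1$, so $\xi_K$ has poles at $s=0$ and $s=1$ and the argument principle contributes a harmless bounded correction; and the gamma factor is now a product of $r_1$ copies of $\Gamma(s/2)$ and $r_2$ copies of $\Gamma(s)$, whose explicit Stirling expansion produces the main term $\frac{T}{\pi}\log\{d_K(T/2\pi e)^{n_K}\}$. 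Because $\zeta_K$ has non-negative rational-integer Dirichlet coefficients, $\overline{\zeta_K(s)}=\zeta_K(\bar s)$ and Backlund's device applies verbatim to $f(s)=\tfrac12\big(\zeta_K(s+iT)+\zeta_K(s-iT)\big)$; the lower bound at the centre becomes $|\zeta_K(b)|\ge\prod_p(1+p^{-b})^{-n_K}=(\zeta(2b)/\zeta(b))^{n_K}$, using that each rational prime has at most $n_K$ prime ideals above it, each of norm at least $p$, while the Jensen majorant $M$ is again furnished by convexity and the functional equation, now carrying the factor $|d_K|$ and the exponent $n_K$. Separating the contributions of $\log d_K$, of $n_K$, and of $\log T$ and optimising as before gives \eqref{Thm12} and Table~\ref{PrTable2}. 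Throughout, the main obstacle is the same: constructing explicit, near-optimal majorants for $L(s,\chi)$ and $\zeta_K(s)$ on the Jensen disks and then tuning the free parameters so that the coefficient of the logarithm comes out as small as $0.317$.
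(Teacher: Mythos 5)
Your outline reproduces the classical Backlund--Jensen contour argument, but it misses the two devices on which the paper's numerical improvement actually rests, and without them the scheme you describe would land you back on McCurley's and Kadiri--Ng's constants (roughly $0.689$ for the leading coefficient at $\eta=\tfrac14$), not the claimed $0.317$.

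First, you attribute the gain to ``a sharper form of Backlund's inequality (e.g.\ replacing the $+1$ by $+\tfrac12$ via endpoint information) together with an optimal choice'' of the disk parameters. That would only touch the additive term. The actual mechanism --- what the paper calls Backlund's trick, and what halves the coefficient --- is the zero-pairing argument of Lemma~\ref{BL}: using the functional equation to bound $|\Delta_+\arg F+\Delta_-\arg F|$ by a small quantity $E$, one shows that if $\Re F(\sigma+iT)^N$ has $n$ relevant zeros $a_\nu$ in $[\tfrac12,\sigma_1]$, then there are at least $n-2-\lfloor NE/\pi\rfloor$ companion zeros $a'_\nu\le\tfrac12$ with $a_\nu\ge 1-a'_\nu$, all of which lie inside the Jensen disk and whose contributions can be paired so that each pair contributes at least $2\log r$ (here the choice $\sigma_1=\tfrac12+\sqrt2(\tfrac12+\eta)$ is exactly what makes the left- and right-of-$1+\eta$ bounds match, and the Heath-Brown weighting handles the case where the disk does not enclose every companion). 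The net effect is that Jensen's formula bounds $\approx 2n$ rather than $n$, giving $n\lesssim\tfrac12(\text{Jensen})$ and hence a leading coefficient $\dfrac{r(\tfrac12+\eta)(\tfrac12-p)}{\pi(1+\eta-p)\log r}$ instead of twice that. Your proposal counts only the $n$ zeros to the right of the critical line and therefore cannot reach $0.317$ by tuning alone.

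Second, you take $f(s)=\tfrac12\big(L(s+iT,\chi)+L(s-iT,\bar\chi)\big)$ (and similarly $\tfrac12(\zeta_K(s+iT)+\zeta_K(s-iT))$), without the $N$-th power. The paper uses $f(s)=\tfrac12\{L(s+iT,\chi)^N+L(s-iT,\bar\chi)^N\}$ (and $a(s)=(s-1)\zeta_K(s)$ raised to the $N$-th power for the Dedekind case) together with Rosser's observation: one chooses a sequence of $N$'s along which $N\arg L(1+\eta+iT,\chi)\to 0\pmod{2\pi}$, forcing $f(1+\eta)/|L(1+\eta+iT,\chi)|^N\to1$, so that the centre value in Jensen's formula is under control and $f(1+\eta)\neq0$. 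Sending $N\to\infty$ also washes out the additive ``$+1$'' in the sign-change count and the $\lfloor NE/\pi\rfloor$ slack. Without the power $N$ you would have to address separately why $f(1+\eta)\neq0$, and the extra additive terms would not disappear. Finally, for the Dedekind case the paper works with $a(s)=(s-1)\zeta_K(s)$ inside $f$ to avoid the pole of $\zeta_K$ at $s=1$, and records the extra term $F(\delta,t)$ in the analogue (\ref{DedEE}) of the $E$-bound; your remark that the pole ``contributes a harmless bounded correction'' needs this to be made precise before the argument closes.
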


Theorem \ref{McCurleyThm} and Table \ref{PrTable} improve on a result due to McCurley \cite[Thm 2.1]{McCurley}; Theorem \ref{ZetaThm} and Table \ref{PrTable2} improve on a result due to Kadiri and Ng \cite[Thm 1]{NgKad2012}. The values of $C_{1}$ and $D_{1}$ given above are less than half of the corresponding values in \cite{McCurley} and \cite{NgKad2012}. The improvement is due to Backlund's trick --- explained in \S \ref{BT} --- and some minor optimisation.

Explicit expressions for $C_{1}$ and $C_{2}$ and for $D_{1}, D_{2}$ and $D_{3}$ are contained in (\ref{3.15}) and (\ref{C2f1}) and in (\ref{FinalD}) and (\ref{FinalD2}). These contain a parameter $\eta$ which, when varied, gives rise to Tables \ref{PrTable} and \ref{PrTable2}. The values in the right sides of (\ref{Thm11}) and (\ref{Thm12}) correspond to $\eta = \frac{1}{4}$ in the tables. Note that some minor improvement in the lower order terms is possible if $T\geq T_{0}>1$; Tables \ref{PrTable} and \ref{PrTable2} give this improvement when $T\geq 10$.
\begin{table}[h]
\caption{$C_{1}$ and $C_{2}$ in Theorem \ref{McCurleyThm} and in \cite{McCurley} for various values of $\eta$ }
\label{PrTable}
\centering
\begin{tabular}{c c c c c c }
\hline\hline

$\qquad\eta\qquad$  & \multicolumn{2}{c} \textrm{McCurley \cite{McCurley}\qquad} & \multicolumn{2}{c}{When $T\geq 1$\qquad\qquad}  & When $T\geq 10$\\
& $C_{1}$ &  $C_{2}$ & $C_{1}$ &  $C_{2}$ & $C_{2}$ \\[0.5 ex] \hline 
0.05 &0.506 &16.989 &0.248 & 9.339 &8.666     \\
0.10 & 0.552 & 13.202 & 0.265 & 8.015 & 7.311    \\
0.15 & 0.597 &11.067 & 0.282 &7.280   & 6.549   \\
0.20 & 0.643 &9.606 &0.300 & 6.778    &6.021  \\
0.25 & 0.689 & 8.509 & 0.317 & 6.401     & 5.616 \\
0.30 & 0.735 &7.641 &0.334  &6.101  &5.288  \\
0.35 & 0.781 & 6.929 & 0.351 & 5.852   & 5.011  \\
0.40 & 0.827 & 6.330 & 0.369 & 5.640  & 4.770  \\
0.45 & 0.873 & 5.817 & 0.386 & 5.456    & 4.556 \\
0.50 & 0.919 & 5.370 & 0.403 & 5.294   & 4.363  \\ \hline\hline
\end{tabular}
\end{table}

\begin{table}[h]
\caption{$D_{1}, D_{2}$ and $D_{3}$ in Theorem \ref{ZetaThm} and in \cite{NgKad2012} for various values of $\eta$ }
\label{PrTable2}
\centering
\begin{tabular}{c c c c c c c c c c}
\hline\hline

$\eta$  & \multicolumn{3}{c} \textrm{Kadiri and Ng \cite{NgKad2012}}  &  & \multicolumn{2}{c}{When $T\geq 1$}  & \multicolumn{2}{c} {When $T\geq 10$}\\
& $D_{1}$ & $D_{2}$ & $D_{3}$ & $D_{1}$ &  $D_{2}$ & $D_{3}$ & $D_{2}$ & $D_{3}$ \\[0.5 ex] \hline 
0.05 &0.506& 16.95& 7.663& 0.248& 9.270& 3.005& 8.637& 2.069 \\
0.10 & 0.552& 13.163& 7.663& 0.265& 7.947& 3.121& 7.288& 2.083   \\
0.15 & 0.597& 11.029& 7.663& 0.282& 7.211& 3.239& 6.526& 2.099   \\
0.20 &  0.643& 9.567& 7.663& 0.300& 6.710& 3.359& 5.997& 2.116  \\
0.25 & 0.689& 8.471& 7.663& 0.317& 6.333& 3.482& 5.593& 2.134\\
0.30 & 0.735& 7.603& 7.663& 0.334& 6.032& 3.607& 5.265& 2.153  \\
0.35 & 0.781& 6.891& 7.663& 0.351& 5.784& 3.733& 4.987& 2.173\\
0.40 &0.827& 6.292& 7.663& 0.369& 5.572& 3.860& 4.746& 2.193  \\
0.45 &0.873& 5.778& 7.663& 0.386& 5.388& 3.988& 4.532& 2.215  \\
0.50 & 0.919& 5.331& 7.663& 0.403& 5.225& 4.116& 4.339& 2.238 \\ \hline\hline
\end{tabular}
\end{table}

Explicit estimation of the error terms of the zero-counting function for $L(s, \chi)$ is done in \S \ref{Ds}. Backlund's trick is modified to suit Dirichlet $L$-functions in \S \ref{BT}. Theorem \ref{McCurleyThm} is proved in \S \ref{LemSec}. Theorem \ref{ZetaThm} is proved in \S \ref{zetasec}.

The Riemann zeta-function, $\zeta(s)$, is both a Dirichlet $L$-function (albeit to the principal character) and a Dedekind zeta-function. The error term in the zero counting function for $\zeta(s)$ has been improved, most recently, by the author \cite{TrudS}. One can estimate the error term in the case of $\zeta(s)$ more efficiently owing to explicit bounds on $\zeta(1+it)$, for $t\gg 1$. It would be of interest to see whether such bounds for $L(1+ it, \chi)$ and $\zeta_{K}(1+ it)$ could be produced relatively easily --- this would lead to an improvement of the results in this paper.

\section{Estimating $N(T, \chi)$}\label{Ds}
Let $\chi$ be a primitive nonprincipal character modulo $k$, and let $L(s, \chi)$ be the Dirichlet $L$-series attached to $\chi$. Let $a= (1-\chi(-1))/2$ so that $a$ is $0$ or $1$ according as $\chi$ is an even or an odd character. Then the function
\begin{equation}\label{xie}
\xi(s, \chi) = \left(\frac{k}{\pi}\right)^{(s+a)/2} \Gamma\left(\frac{s+a}{2}\right)L(s, \chi),
\end{equation}
is entire and satisfies the functional equation
\begin{equation}\label{xif}
\xi(1-s, \overline{\chi}) = \frac{i^{a}k^{1/2}}{\tau(\chi)}\xi(s, \chi),
\end{equation}
where $\tau(\chi) = \sum_{n=1}^{k}\chi(n)\exp(2\pi in/k)$.

Let $N(T, \chi)$ denote the number of zeroes $\rho = \beta + i\gamma$ of $L(s, \chi)$ for which $0< \beta < 1$ and $|\gamma|\leq T$. For any $\sigma_{1}>1$ form the rectangle $R$ having vertices at $\sigma_{1} \pm iT$ and $1-\sigma_{1} \pm iT$, and let $\mathcal{C}$ denote the portion of the boundary of the rectangle in the region $\sigma \geq \frac{1}{2}$.  From Cauchy's theorem and (\ref{xif})  one deduces that
\begin{equation*}\label{N1}
N(T, \chi) = \frac{1}{\pi} \Delta_{\mathcal{C}} \arg \xi(s, \chi).
\end{equation*}
Thus
\begin{equation}\label{N2}
\begin{split}
N(T, \chi) &= \frac{1}{\pi}\left\{ \Delta_{\mathcal{C}}\arg\left(\frac{k}{\pi}\right)^{(s+a)/2} + \Delta_{\mathcal{C}}\arg\Gamma\left(\frac{s+a}{2}\right) + \Delta_{\mathcal{C}}\arg L(s, \chi)\right\}\\
&= \frac{T}{\pi} \log\frac{k}{\pi} + \frac{2}{\pi} \Im\log\Gamma\left(\frac{1}{4} + \frac{a}{2} + i\frac{T}{2}\right) + \frac{1}{\pi}\Delta_{\mathcal{C}}\arg L(s, \chi).
\end{split}
\end{equation}
To evaluate the second term on the right-side of (\ref{N2}) one needs an explicit version of Stirling's formula. Such a version is provided in \cite[p.\ 294]{Olver}, to wit
\begin{equation}\label{Stirling}
\log\Gamma(z) = (z-\frac{1}{2})\log z - z + \frac{1}{2} \log 2\pi + \frac{\theta}{6|z|},
\end{equation}
which is valid for $|\arg z| \leq \frac{\pi}{2}$, and in which $\theta$ denotes a complex number satisfying $|\theta| \leq 1$.
Using (\ref{Stirling}) one obtains
\begin{equation}\label{Storl}
\begin{split}
\Im\log\Gamma\left(\frac{1}{4} + \frac{a}{2} + i\frac{T}{2}\right) = & \frac{T}{2} \log\frac{T}{2e} + \frac{T}{4} \log\left( 1+ \frac{(2a+1)^{2}}{4T^{2}}\right) \\
&+ \frac{2a-1}{4}\tan^{-1}\left(\frac{2T}{2a +1}\right) + \frac{\theta}{3|\frac{1}{2} + a+ iT|}.
\end{split}
\end{equation}
Denote the last three terms in (\ref{Storl}) by $g(a, T)$. Using elementary calculus one can show that $|g(0,T)|\leq g(1, T)$ and that $g(1, T)$ is decreasing for $T\geq 1$.
This, together with (\ref{N2}) and (\ref{Storl}), shows that
\begin{equation}\label{3.11}
\bigg|N(T, \chi) - \frac{T}{\pi}\log\frac{kT}{2\pi e}\bigg| \leq \frac{1}{\pi}\big| \Delta_{\mathcal{C}}\arg L(s, \chi)\big| + \frac{2}{\pi}g(1, T).
\end{equation}

All that remains is to estimate $\Delta_{\mathcal{C}}\arg L(s, \chi)$. Write $\mathcal{C}$ as the union of three straight lines, viz.\ let $\mathcal{C} = \mathcal{C}_{1} \cup \mathcal{C}_{2} \cup \mathcal{C}_{3}$, where $\mathcal{C}_{1}$ connects $\frac{1}{2} - iT$ to $\sigma_{1} - iT$; $\mathcal{C}_{2}$ connects $\sigma_{1} - iT$ to $\sigma_{1} + iT$; and $\mathcal{C}_{3}$ connects $\sigma_{1} + iT$ to $\frac{1}{2} + iT$.  Since $L(\overline{s}, \chi) = \overline{L(s, \overline{\chi})}$ a bound for the integral on $\mathcal{C}_{3}$ will serve as a bound for that on $\mathcal{C}_{1}$. Estimating the contribution along $\mathcal{C}_{2}$ poses no difficulty since
\begin{equation*}
|\arg L(\sigma_{1} + it, \chi)| \leq |\log L(\sigma_{1} + it, \chi)| \leq \log\zeta(\sigma_{1}).
\end{equation*}
To estimate $\Delta_{\mathcal{C}_{3}}\arg L(s, \chi)$ define
\begin{equation}\label{wwbw}
f(s) = \frac{1}{2} \{L(s+ iT, \chi)^{N} + L(s-iT, \overline{\chi})^{N}\},
\end{equation}
for some positive integer $N$, to be determined later. Thus $f(\sigma) = \Re L(\sigma + iT, \chi)^{N}$. Suppose that there are $n$ zeroes of $\Re L(\sigma + iT, \chi)^{N}$ for $\sigma\in [\frac{1}{2}, \sigma_{1}]$. These zeroes partition the segment into $n+1$ intervals. On each interval $\arg L(\sigma + iT, \chi)^{N}$ can increase by at most $\pi$. Thus
\begin{equation*}
|\Delta_{\mathcal{C}_{3}} \arg L(s, \chi)| = \frac{1}{N} |\Delta_{\mathcal{C}_{3}} \arg L(s, \chi)^{N}| \leq \frac{(n+1)\pi}{N},
\end{equation*}
whence (\ref{3.11}) may be written as
\begin{equation}\label{stp}
\bigg|N(T, \chi) - \frac{T}{\pi}\log\frac{kT}{2\pi e}\bigg| \leq \frac{2}{\pi}\left\{\log\zeta(\sigma_{1}) + g(1,T)\right\} + \frac{2(n+1)}{N}.
\end{equation}
One may estimate $n$ with Jensen's Formula.
\begin{Lem}[Jensen's Formula]\label{JL}
Let $f(z)$ be holomorphic for $|z-a|\leq R$ and non-vanishing at $z=a$. Let the zeroes of $f(z)$ inside the circle be $z_{k}$, where $1\leq k\leq n$, and let $|z_{k} - a| = r_{k}$. Then
\begin{equation}\label{JF1}
\log\frac{R^{n}}{|r_{1} r_{2} \cdots r_{n}|} = \frac{1}{2\pi}\int_{0}^{2\pi} \log f(a+ Re^{i\phi})\, d\phi - \log |f(a)|.
\end{equation}
\end{Lem}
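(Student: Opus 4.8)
The plan is to divide out the zeros of $f$ with a Blaschke-type factor and appeal to the mean value property of harmonic functions. After translating so that $a=0$ (and assuming, after a harmless reduction, that $f$ has no zeros on the circle $|z|=R$ — the general case following by letting $R$ approach its value from below, since the integral in (\ref{JF1}) stays convergent and any boundary zero contributes nothing to the left side because then $r_k=R$), set
\[
g(z) = f(z)\prod_{k=1}^{n}\frac{R^{2}-\overline{z_{k}}\,z}{R\,(z-z_{k})}.
\]
Each factor has a simple pole at $z=z_{k}$ cancelling the corresponding zero of $f$ (a zero of multiplicity $m$ is simply repeated $m$ times in the list $z_{1},\dots,z_{n}$, so the bookkeeping is automatic), so $g$ extends holomorphically and without zeros to a neighbourhood of $|z|\le R$. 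Hence $\log|g|$ is harmonic there, since locally $\log|g|=\Re(\log g)$ for a branch of $\log g$.

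Next I would apply the mean value property $\log|g(0)| = \frac{1}{2\pi}\int_{0}^{2\pi}\log|g(Re^{i\phi})|\,d\phi$ and evaluate the two sides. At the centre $\bigl|\tfrac{R^{2}}{R(-z_{k})}\bigr| = \tfrac{R}{|z_{k}|} = \tfrac{R}{r_{k}}$, so $\log|g(0)| = \log|f(0)| + \log\tfrac{R^{n}}{r_{1}\cdots r_{n}}$. On the boundary I would use $\overline{z}=R^{2}/z$ when $|z|=R$ to write $R^{2}-\overline{z_{k}}\,z = z\,\overline{(z-z_{k})}$, whence $|R^{2}-\overline{z_{k}}\,z| = R|z-z_{k}|$ and every Blaschke factor has modulus $1$ on $|z|=R$; therefore $\log|g(Re^{i\phi})| = \log|f(Re^{i\phi})|$. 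Substituting both evaluations into the mean value identity and rearranging gives exactly (\ref{JF1}).

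The only genuinely delicate points are the two reductions flagged above: that the mean value property applies (i.e.\ that zero-freeness of $g$ makes $\log|g|$ harmonic on the closed disc, which requires $f$ holomorphic on a slightly larger disc, or an approximation argument), and the continuity-in-$R$ step used to cover zeros lying on the circle $|z|=R$. Once those are in place, everything else is the direct computation of $|g|$ at the centre and on the boundary, so I expect no further obstacles.
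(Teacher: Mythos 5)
Your proof is correct and is the standard Blaschke-product argument for Jensen's formula: divide out the interior zeros by factors $\frac{R^2-\overline{z_k}z}{R(z-z_k)}$ that are unimodular on the boundary circle, observe that the modified function $g$ is zero-free so $\log|g|$ is harmonic, and apply the mean value property. Note, however, that the paper does not prove Lemma~\ref{JL} at all --- it is stated as a classical result (the sentence ``This is done in \S~\ref{LemSec}'' following the lemma refers to the subsequent \emph{application} of Jensen's formula to estimate $n$, not to a proof of the formula itself). So there is no internal proof to compare against; your argument is a complete and correct supply of the missing classical proof, including the right bookkeeping for multiple zeros (repetition in the list $z_1,\dots,z_n$) and the limiting-in-$R$ argument to dispose of boundary zeros. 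One small remark on the statement itself: the integrand in \eqref{JF1} as printed reads $\log f(a+Re^{i\phi})$, but --- as your computation confirms and as the later use in \eqref{jenco1} requires --- it should be $\log|f(a+Re^{i\phi})|$; you have correctly read it with the absolute value.
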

This is done in \S \ref{LemSec}.

\section{Backlund's Trick}\label{BT}
For a complex-valued function $F(s)$, and for  $\delta>0$ define $\Delta_{+}\arg F(s)$ to be the change in argument of $F(s)$ as $\sigma$ varies from $\frac{1}{2}$ to $\frac{1}{2} + \delta$, and define $\Delta_{-}\arg F(s)$ to be the change in argument of $F(s)$ as $\sigma$ varies from $\frac{1}{2}$ to $\frac{1}{2} - \delta$.

Backlund's trick is to show that if there are zeroes of $\Re F(\sigma + iT)^{N}$ on the line $\sigma \in[\frac{1}{2}, \sigma_{1}]$, then there are zeroes on the line $\sigma \in[1-\sigma_{1}, \frac{1}{2}]$. This device was introduced by Backlund in \cite{Backlund1918} for the Riemann zeta-function.

Following Backlund's approach one can prove the following general lemma.

\begin{Lem}\label{BL}
Let $N$ be a positive integer and let $T\geq T_{0}\geq 1$. Suppose that there is an upper bound $E$ that satisfies 
\begin{equation*}
| \Delta_{+}\arg F(s) + \Delta_{-}\arg F(s)| \leq E,
\end{equation*}
where $E = E(\delta, T_{0})$. Suppose further that there exists an $n\geq 3 + \lfloor NE/\pi\rfloor$ for which
\begin{equation}\label{C3gt}
n\pi \leq|\Delta_{\mathcal{C}_{3}} \arg F(s)^{N}| < (n+1)\pi.
\end{equation}
Then there are at least $n$ distinct zeroes of $\Re F(\sigma + iT)^{N}$, denoted by  $\rho_{\nu}= a_{\nu} + iT$ (where $1\leq \nu \leq n$ and $\frac{1}{2} \leq a_{n}< a_{n-1} < \cdots \leq \sigma_{1}$), such that the bound $|\Delta\arg F(s)^{N}| \geq \nu\pi$ is achieved for the first time when $\sigma$ passes over $a_{\nu}$ from above.

In addition there are at least $n-2 - \lfloor N E/\pi\rfloor$ distinct zeroes $\rho_{\nu}' = a_{\nu}' + iT$ (where $1\leq \nu \leq n-2$ and $1-\sigma_{1} \leq a_{1}' < a_{2}' < \cdots \leq \frac{1}{2}$).

Moreover
\begin{equation}\label{jenpre}
a_{\nu} \geq 1-a_{\nu}', \quad \textrm{for } \nu = 1, 2, \ldots, n-2 - \lfloor NE/\pi\rfloor ,
\end{equation}
and, if $\eta$ is defined by $\sigma_{1} = \frac{1}{2} + \sqrt{2}(\eta + \frac{1}{2})$, then
\begin{equation}\label{finalae}
\prod_{\nu = 1}^{n} |1 + \eta - a_{\nu}| \prod_{\nu = 1}^{n -2 - \lfloor NE/\pi\rfloor } |1 + \eta - a_{\nu}'|\leq (\tfrac{1}{2} + \eta)^{2n-2 - \lfloor NE/\pi\rfloor }.
\end{equation}
\end{Lem}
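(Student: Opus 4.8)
The plan is to follow Backlund's original argument, adapted to the complex-conjugate-symmetrised function $F(s)^N$. First I would set up the counting of sign changes of $\Re F(\sigma+iT)^N = \Re F(s)^N$. By hypothesis \eqref{C3gt}, the total variation of $\arg F(s)^N$ along $\mathcal{C}_3$ (i.e.\ from $\sigma_1+iT$ to $\tfrac12+iT$) lies in $[n\pi,(n+1)\pi)$; since $\arg F(s)^N$ can change by more than $\pi$ only when $\Re F(s)^N$ vanishes and changes sign, each successive threshold $|\Delta\arg F(s)^N|\ge \nu\pi$ must first be attained at a zero $a_\nu$ of $\Re F(\sigma+iT)^N$. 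Ordering $\sigma_1\ge a_1 > a_2 > \cdots > a_n \ge \tfrac12$ as $\sigma$ decreases from $\sigma_1$ gives the first batch of $n$ zeroes. Next I would push the count past $\sigma=\tfrac12$ using the bound $E$: travelling from $\tfrac12$ down to $\tfrac12-\delta$, the argument of $F(s)^N$ can change by at most $NE$ in addition to what the reflected copy of $\mathcal{C}_3$ forces, so one loses at most $\lfloor NE/\pi\rfloor$ thresholds plus one or two for boundary effects; this yields at least $n-2-\lfloor NE/\pi\rfloor$ further zeroes $a_\nu'$ with $1-\sigma_1\le a_1'<a_2'<\cdots\le\tfrac12$ — this is exactly where the hypothesis $n\ge 3+\lfloor NE/\pi\rfloor$ is needed so that this count is positive.

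The key structural step is the interlacing inequality \eqref{jenpre}. The idea is that the $\nu$-th zero $a_\nu$ to the right of $\tfrac12$ and the $\nu$-th zero $a_\nu'$ to the left are produced by the \emph{same} $\nu$-th threshold crossing of $|\Delta\arg F(s)^N|$, one going right-to-left from $\sigma_1$ and the other going (after reflection) left-to-right from $1-\sigma_1$; since $\Re F(s)^N$ and its reflection agree in the relevant sense and the argument accumulates monotonically in modulus, the reflected zero cannot lie strictly inside the point reflected through $\tfrac12$ of the original, giving $a_\nu\ge 1-a_\nu'$, equivalently $1-a_\nu\le a_\nu'$. I would make this precise by comparing, for each $\nu$, the two monotone step-functions $\sigma\mapsto$ (number of sign changes of $\Re F(\sigma+iT)^N$ encountered so far) on $[\tfrac12,\sigma_1]$ and on $[1-\sigma_1,\tfrac12]$, and using that together they are controlled by the full variation on $\mathcal{C}_3$ plus $NE$.

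Finally I would assemble \eqref{finalae}. With $\sigma_1=\tfrac12+\sqrt2(\eta+\tfrac12)$, every zero $a_\nu\in[\tfrac12,\sigma_1]$ satisfies $|1+\eta-a_\nu|\le 1+\eta-\tfrac12=\tfrac12+\eta$, so $\prod_{\nu=1}^n|1+\eta-a_\nu|\le(\tfrac12+\eta)^n$. For the left-hand zeroes, \eqref{jenpre} gives $a_\nu'\ge 1-a_\nu$, hence $1+\eta-a_\nu'\le \eta+a_\nu$; combined with $a_\nu\le\sigma_1$ this is at most $\eta+\sigma_1=\tfrac12+\eta+\sqrt2(\eta+\tfrac12)$, which one checks is bounded by... — more cleanly, pair the $\nu$-th right zero with the $\nu$-th left zero and bound $|1+\eta-a_\nu|\cdot|1+\eta-a_\nu'|\le(\tfrac12+\eta)^2$ using $a_\nu\ge\tfrac12$, $a_\nu'\le\tfrac12$ together with $a_\nu+a_\nu'\ge 1$ (so the two factors, whose sum of "deficits from $\tfrac12+\eta$" is controlled, have product at most $(\tfrac12+\eta)^2$ by the choice of $\sigma_1$ that makes the endpoints symmetric about... ). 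Doing this for $\nu=1,\dots,n-2-\lfloor NE/\pi\rfloor$ and bounding the remaining right-hand factors individually by $\tfrac12+\eta$ gives the stated exponent $2n-2-\lfloor NE/\pi\rfloor$.

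I expect the main obstacle to be the interlacing step \eqref{jenpre}: one has to argue carefully that the reflected configuration of sign changes dominates, accounting precisely for the $\lfloor NE/\pi\rfloor$ and the two "lost" thresholds, rather than hand-waving the symmetry; the final product estimate \eqref{finalae} is then essentially bookkeeping once the geometric choice of $\sigma_1$ in terms of $\eta$ is pinned down so that $\tfrac12+\eta$ is the common bound.
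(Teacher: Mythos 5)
Your overall strategy mirrors the paper's (Backlund's argument via threshold crossings, then pairing for Jensen), but the two substantive steps are not actually established. For the interlacing \eqref{jenpre} your sketch says to compare monotone step-functions and argue that ``the reflected zero cannot lie strictly inside the reflected point'' — but the hypothesis is only $|\Delta_+\arg F^N + \Delta_-\arg F^N|\le NE$, not a symmetry of the sign-change configurations, so there is no reflection to appeal to. The mechanism the paper uses, and which your sketch omits, is to fix $\nu$, take $\delta=a_\nu-\tfrac12$, observe that this choice gives $|\Delta_+\arg F^N|\ge(n-\nu)\pi$ (because the thresholds $\nu+1,\dots,n$ are all attained at abscissae in $[\tfrac12,a_\nu]$), and then deduce from the $E$-bound that $|\Delta_-\arg F^N|\ge (n-\nu-NE/\pi)\pi$, which forces $n-1-\lfloor NE/\pi\rfloor-\nu$ sign changes of $\Re F^N$ with real part in $[1-a_\nu,\tfrac12]$; iterating over $\nu=n-2-\lfloor NE/\pi\rfloor,\dots,1$ yields both the existence of the $a'_\nu$ and the inequality $a'_\nu\ge 1-a_\nu$ simultaneously. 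Without pinning $\delta$ to each $a_\nu$ the loss is not ``at most $\lfloor NE/\pi\rfloor$ plus one or two boundary effects'' in any controlled sense.

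For \eqref{finalae} the claim that $a_\nu\ge\tfrac12$, $a'_\nu\le\tfrac12$, $a_\nu+a'_\nu\ge1$ already force $|1+\eta-a_\nu|\,|1+\eta-a'_\nu|\le(\tfrac12+\eta)^2$ is false: those three facts alone permit $a_\nu$ arbitrarily large, making the product unbounded. One genuinely needs the upper bound $a_\nu\le\sigma_1$ together with a split into the two cases $a_\nu\le 1+\eta$ and $a_\nu>1+\eta$. In the first case \eqref{jenpre} gives the product $\le(1+\eta-a_\nu)(\eta+a_\nu)$, decreasing in $a_\nu$, hence $\le(\tfrac12+\eta)^2$ at $a_\nu=\tfrac12$; in the second it gives $\le(a_\nu-1-\eta)(\eta+a_\nu)$, increasing in $a_\nu$, hence $\le\sigma_1^2-\sigma_1-\eta(1+\eta)$ at $a_\nu=\sigma_1$. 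The choice $\sigma_1=\tfrac12+\sqrt2(\eta+\tfrac12)$ is precisely what makes $\sigma_1^2-\sigma_1-\eta(1+\eta)=(\tfrac12+\eta)^2$, so both cases give the same bound; your sentence trails off at exactly this verification, which is the point of the special $\sigma_1$. With that algebra done and the unpaired zeroes handled by $|1+\eta-a_\nu|\le\tfrac12+\eta$ (which requires both $a_\nu\ge\tfrac12$ and $a_\nu\le\sigma_1$), the exponent $2n-2-\lfloor NE/\pi\rfloor$ falls out.
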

\begin{proof}
It follows from (\ref{C3gt}) that $|\arg F(s)^{N}|$ must increase as $\sigma$ varies from $\sigma_{1}$ to $\frac{1}{2}$. This increase may only occur if $\sigma$ has passed over a zero of $\Re F(s)^{N}$, irrespective of its multiplicity. In particular as $\sigma$ moves along $\mathcal{C}_{3}$
\begin{equation*}
|\Delta\arg F(s)^{N}| \geq \pi, 2\pi,\ldots,  n\pi.
\end{equation*}
Let $\rho_{\nu}= a_{\nu} + it$ denote the distinct zeroes of $\Re F(s)^{N}$ the passing over of which produces, for the first time, the bound $|\Delta\arg F(s)^{N}| \geq \nu\pi$. It follows that there must be $n$ such points, and that $\frac{1}{2} \leq a_{n} < a_{n-1}< \ldots < a_{2} < a_{1} \leq \sigma_{1}$. Also if $\frac{1}{2} + \delta \geq a_{\nu}$ then
\begin{equation}\label{gmcd}
|\Delta_{+}\arg F(s)^{N}| \geq  (n-\nu)\pi.
\end{equation}
For (\ref{gmcd}) is true when $\nu = n$ and so, by the definition of $\rho_{\nu}$, it is true for all $1\leq \nu\leq n$.

By the hypothesis in Lemma \ref{BL},
\begin{equation}\label{sas3e}
| \Delta_{+}\arg F(s)^{N} + \Delta_{-}\arg F(s)^{N}| \leq NE.
\end{equation}
When $\frac{1}{2} + \delta \geq a_{\nu},$ (\ref{gmcd}) and (\ref{sas3e}) show that
\begin{equation}\label{jargo}
|\Delta_{-}\arg F(s)^{N}|  \geq (n-\nu - NE/\pi)\pi,
\end{equation}
for $1 \leq \nu \leq n-2-\lfloor NE/\pi\rfloor $. When $\frac{1}{2} + \delta = a_{\nu}$ and $\nu= n-2-\lfloor NE/\pi\rfloor$, it follows from (\ref{jargo}) that $|\Delta_{-}\arg F(s)^{N}|  \geq \pi$. The increase in the argument is only possible if there is a zero of $\Re F(s)^{N}$ the real part of which is greater than $\frac{1}{2} - \delta = 1- a_{n-2-\lfloor NE/\pi\rfloor}$. Label this zero $\rho_{n-2-\lfloor NE/\pi\rfloor}' = a_{n-2-\lfloor NE/\pi\rfloor}' + iT$. Repeat the procedure when $\nu = n-3 -\lfloor NE/\pi\rfloor, \ldots, 2, 1$, whence (\ref{jenpre}) follows. This produces a positive number of zeroes in $[1-\sigma_{1}, \frac{1}{2}]$ provided that $n\geq 3 + \lfloor NE/\pi\rfloor$.


For zeroes $\rho_{\nu}$ lying to the left of $1+ \eta$ one has
\begin{equation*}
|1+ \eta - a_{\nu}||1+ \eta - a_{\nu}'| \leq  (1+ \eta - a_{\nu})(\eta + a_{\nu}),
\end{equation*}
by (\ref{jenpre}). This is a decreasing function for $a_{\nu}\in[\frac{1}{2}, 1+ \eta]$ and so, for these zeroes
\begin{equation}\label{lbt}
|1+ \eta - a_{\nu}||1+ \eta - a_{\nu}'|  \leq (\tfrac{1}{2} + \eta)^{2}.
\end{equation}
For zeroes lying to the right of $1+ \eta$ one has
\begin{equation*}
|1+ \eta - a_{\nu}||1+ \eta - a_{\nu}'|  \leq  (a_{\nu} - 1 - \eta)(\eta + a_{\nu}).
\end{equation*}
This is increasing with $a_{\nu}$ and so, for these zeroes
\begin{equation}\label{rbt}
|1+ \eta - a_{\nu}||1+ \eta - a_{\nu}'|  \leq \sigma_{1}^{2} - \sigma_{1} - \eta(1+ \eta).
\end{equation}
The bounds in (\ref{lbt}) and (\ref{rbt}) are equal\footnote{McCurley does not use Backlund's trick. Accordingly, his upper bounds in place of (\ref{lbt}) and (\ref{rbt}) are $\frac{1}{2} + \eta$ and $\sigma_{1} - 1 - \eta$. These are equal at $\sigma_{1} = \frac{3}{2} + 2\eta$, which is his choice of $\sigma_{1}$.} when $\sigma_{1} = \frac{1}{2} + \sqrt{2}(\eta + \frac{1}{2})$. Thus (\ref{finalae}) holds for $\sigma_{1} =  \frac{1}{2} + \sqrt{2}(\eta + \frac{1}{2})$. For the unpaired zeroes 
one may use the bound $|1+\eta - a_{\nu}| \leq \frac{1}{2} + \eta$, whence (\ref{finalae}) follows.
\end{proof}

\subsection{Applying Backlund's Trick}
Apply Jensen's formula on the function $F(s)$, with $a = 1+\eta$ and $R = r(\frac{1}{2}+ \eta)$, where $r>1$. Assume that the hypotheses of Lemma \ref{BL} hold. 
If $1+ \eta - r(\frac{1}{2} + \eta) \leq 1-\sigma_{1}$ then all of the $2n-1-\lfloor NE/\pi\rfloor$ zeroes of $\Re F(\sigma+ iT)^{N}$ are included in the contour. Thus the left side of (\ref{JF1}) is
\begin{equation}\label{MayDay}
\begin{split}
&\log\frac{\{r(\frac{1}{2}+ \eta)\}^{2n-2-\lfloor NE/\pi\rfloor}}{|1+\eta - a_{1}| \cdots |1+ \eta - a_{n}||1+ \eta - a'_{1}|\cdots |1+ \eta - a_{n-2-\lfloor NE/\pi\rfloor}'|} \\
&\geq (2n -2-\lfloor NE/\pi\rfloor)\log r,
\end{split}
\end{equation}
by (\ref{finalae}).
If the contour does not enclose all of the $2n-2-[NE/\pi]$ zeroes of $\Re F(\sigma + iT)^{N}$, then the following argument, thoughtfully provided by Professor D.R.\ Heath-Brown, allows one still to make a saving.

To a zero at $x+ it$, with $\frac{1}{2} \leq x \leq 1+ \eta$ one may associate a zero at $x' + it$ where, by (\ref{jenpre}), $1-x \leq x' \leq \frac{1}{2}$. Thus, for an intermediate radius, zeroes to the right of $\frac{1}{2}$ yet still close to $\frac{1}{2}$ will have their pairs included in the contour. Let $X$ satisfy $1 + \eta - (\frac{1}{2} + \eta)/r < X < \min\{1+ \eta, r(\frac{1}{2} + \eta) - \eta\}$. Since $r>1$, this guarantees that $X>\frac{1}{2}$. For a zero at $x+ it$ consider two cases: $x \geq X$ and $x <X$. 

In the former, there is no guarantee that the paired zero $x'+ it$ is included in the contour. Thus the zero at $x+ it$ is counted in Jensen's formula with weight
\begin{equation}\label{1stHBc}
\log \frac{r(\frac{1}{2} + \eta)}{1+ \eta - x} \geq \log \frac{r(\frac{1}{2} + \eta)}{1+ \eta - X}.
\end{equation}

Now, when $x<X$, the paired zero at $x'$ is included in the contour, since $1+ \eta - r(\frac{1}{2} + \eta) <1-X < 1-x \leq x'$. Thus, in Jensen's formula, the contribution is
\begin{equation}\label{uofx}
\begin{split}
\log \frac{r(\frac{1}{2} + \eta)}{1+ \eta - x} + \log \frac{r(\frac{1}{2} + \eta)}{1+ \eta - x'} &\geq \log \frac{r(\frac{1}{2} + \eta)}{1+ \eta - x} + \log \frac{r(\frac{1}{2} + \eta)}{\eta + x}\\
&= \log \frac{r^{2} (\frac{1}{2} + \eta)^{2}}{(1+ \eta - x)(\eta + x)}.
\end{split}
\end{equation}
The function appearing in the denominator of (\ref{uofx}) is decreasing for $x\geq \frac{1}{2}.$ Thus the zeroes at $x+ it$ and $x' + it$ contribute at least $2\log r$. 

Suppose now that there are $n$ zeroes in $[\frac{1}{2}, \sigma_{1}]$, and that there are $k$ zeroes the real parts of which are at least $X$. The contribution of all the zeroes ensnared by the integral in Jensen's formula is at least
\begin{equation*}
k\log \frac{r(\frac{1}{2} + \eta)}{1+ \eta - X} + 2(n-k)\log r = k\log\frac{(\frac{1}{2} + \eta)}{r(1+ \eta - X)} + 2n\log r \geq 2n\log r,
\end{equation*}
which implies (\ref{MayDay})

\subsection{Calculation of $E$ in Lemma \ref{BL}}
From (\ref{xie}) and (\ref{xif}) it follows that
\begin{equation*}\label{args}
\Delta_{+}\arg \xi(s, \chi) = - \Delta_{-}\arg \xi(s, \chi).
\end{equation*}
Since $\arg (\pi/k)^{-\frac{s+a}{2}} = -\frac{t}{2}\log (\pi/k)$ then $\Delta_{\pm} (\pi/k)^{-\frac{s+a}{2}} =0$, whence
\begin{equation*}\label{zetaarg}
| \Delta_{+}\arg L(s, \chi) + \Delta_{-}\arg L(s, \chi)| = | \Delta_{+}\arg\Gamma(\tfrac{s+a}{2}) + \Delta_{-}\arg\Gamma(\tfrac{s+a}{2})|.
\end{equation*}
Using 
(\ref{Stirling}) one may write
\begin{equation}\label{llde}
\bigg| \Delta_{+}\arg \Gamma  \left(\frac{s+a}{2}\right) + \Delta_{-}\arg\Gamma\left(\frac{s+a}{2}\right)\bigg| \leq  G(a, \delta, t),
\end{equation}
where
\begin{equation}\label{Gdef}
\begin{split}
G(a, \delta, t) = &\frac{1}{2}(a-\frac{1}{2} + \delta) \tan^{-1}\frac{a+ \frac{1}{2} + \delta}{t} + \frac{1}{2}(a-\frac{1}{2} - \delta) \tan^{-1}\frac{a+ \frac{1}{2} - \delta}{t}\\ & - (a-\frac{1}{2}) \tan^{-1} \frac{a+ \frac{1}{2}}{t}
 -\frac{t}{4}\log\left[1+ \frac{2\delta^{2}\{t^{2} - (\frac{1}{2} + a)^{2}\} + \delta^{4}}{\left\{t^{2} + (\frac{1}{2} + a)^{2}\right\}^{2}}\right]\\ 
&+\frac{1}{3}\left\{\frac{1}{|\frac{1}{2} + \delta +a + it|} + \frac{1}{|\frac{1}{2} - \delta +a + it|} + \frac{2}{|\frac{1}{2} +a + it|}\right\}.
\end{split}
\end{equation}
One can show that $G(a, \delta, t)$ is decreasing in $t$ and increasing in $\delta$, and that $G(1, \delta, t) \leq G(0, \delta, t)$. Therefore, since, in Lemma \ref{BL}, one takes $\sigma_{1} = \frac{1}{2} + \sqrt{2}(\frac{1}{2} + \eta)$ it follows that $\delta = \sqrt{2}(\frac{1}{2} + \eta)$, whence one may take
\begin{equation}\label{Etake}
E = G(0, \sqrt{2}(\tfrac{1}{2} + \eta), t_{0}),
\end{equation}
for $t\geq t_{0}$.

\section{Proof of Theorem 1}\label{LemSec}
First, suppose that $|\Delta_{\mathcal{C}_{3}} \arg L(s, \chi)^{N}| <  3 + \lfloor NE/\pi\rfloor$. Thus (\ref{3.11}) becomes
\begin{equation}\label{polk}
\bigg|N(T, \chi) - \frac{T}{\pi}\log\frac{kT}{2\pi e}\bigg| \leq \frac{2}{\pi}\left\{\log\zeta(\sigma_{1}) + g(1,T) + E\right\} + \frac{6}{N}.
\end{equation} 
Now suppose that $|\Delta_{\mathcal{C}_{3}} \arg L(s, \chi)^{N}| \geq  3 + \lfloor NE/\pi\rfloor$, whence Lemma \ref{BL} may be applied.

To apply Jensen's formula to the function $f(s)$, defined in (\ref{wwbw}), it is necessary to show that $f(1+\eta)$ is non-zero: this is easy to do upon invoking an observation due to Rosser \cite{Rossers}. Write $L(1+\eta + iT, \chi) = Ke^{i\psi}$, where $K> 0$. Choose a sequence of $N$'s tending to infinity for which $N\psi$ tends to zero modulo $2\pi$. Thus
\begin{equation}\label{Rost}
\frac{f(1+\eta)}{|L(1+\eta+iT, \chi)|^{N}} \rightarrow 1.
\end{equation}
Since $\chi$ is a primitive nonprincipal character then $f(s)$ is holomorphic on the circle. It follows from (\ref{JF1}) and (\ref{MayDay}) that
\begin{equation}\label{jenco1}
n \leq \frac{1}{4\pi\log r} J - \frac{1}{2\log r} \log |f(1+\eta)| +1 + \frac{NE}{2\pi},
\end{equation}
where
\begin{equation*}\label{71}
J = \int_{-\frac{\pi}{2}}^{\frac{3\pi}{2}} \log|f(1+\eta + r(\frac{1}{2} + \eta)e^{i\phi})|\, d\phi.
\end{equation*}
Write $J = J_{1} + J_{2}$ where the respective ranges of integration of $J_{1}$ and $J_{2}$ are $\phi\in[-\pi/2, \pi/2]$ and $\phi\in[\pi/2, 3\pi/2]$. For $\sigma>1$
\begin{equation}\label{ie1}
\frac{\zeta(2\sigma)}{\zeta(\sigma)} \leq |L(s, \chi)| \leq \zeta(\sigma),
\end{equation}
which shows that
\begin{equation}\label{J1f}
J_{1} \leq N\int_{-\pi/2}^{\pi/2}\log\zeta(1+\eta + r(\tfrac{1}{2}+ \eta)\cos\phi)\, d\phi.
\end{equation}
On $J_{2}$ use
\begin{equation*}
\log |f(s)| \leq N\log |L(s + iT, \chi)|,
\end{equation*}
and the convexity bound \cite[Thm 3]{Rademacher}
\begin{equation}\label{rconv}
|L(s, \chi)| \leq \left(\frac{k|s+1|}{2\pi}\right)^{(1+\eta - \sigma)/2} \zeta(1+\eta),
\end{equation}
valid for $-\eta \leq \sigma \leq 1+\eta$, where $0<\eta \leq \frac{1}{2}$, to show that
\begin{equation}\label{J2}
J_{2} \leq \pi N \log \zeta(1+\eta) + N\frac{r(\frac{1}{2} + \eta)}{2} \int_{\pi/2}^{3\pi/2}(-\cos\phi) \log\left\{\frac{kTw(T, \phi, \eta, r)}{2\pi}\right\}\, d\phi,
\end{equation}
where
\begin{equation}\label{wder}
\begin{split}
w(& T, \phi, \eta,r)^{2} = \\
&1+ \frac{2r(\frac{1}{2} + \eta)\sin\phi}{T} + \frac{r^{2}(\frac{1}{2} + \eta)^{2} + (2+ \eta)^{2} + 2r(\frac{1}{2} + \eta)(2+\eta)\cos\phi}{T^{2}}.
\end{split}
\end{equation}
For $\phi\in[\pi/2, \pi]$, the function $w(T, \phi, \eta, r)$ is decreasing in $T$; for $\phi\in[\pi, 3\pi/2]$ it is bounded above by $w^{*}(T, \phi, \eta,r)$ where
\begin{equation}\label{wder2}
w^{*}(T, \phi, \eta,r)^{2} = 1 + \frac{r^{2}(\frac{1}{2} + \eta)^{2} + (2+ \eta)^{2} + 2r(\frac{1}{2} + \eta)(2+\eta)\cos\phi}{T^{2}},
\end{equation}
which is decreasing in $T$.

To bound $n$ using (\ref{jenco1}) it remains to bound $-\log|f(1+\eta)|$. This is done by using (\ref{Rost}) and (\ref{ie1}) to show that
\begin{equation*}
-\log |f(1+\eta)| \rightarrow -N \log |L(1+\eta + iT)| \leq -N \log[\zeta(2+2\eta)/\zeta(1+\eta)].
\end{equation*}
This, together with (\ref{stp}), (\ref{polk}), (\ref{jenco1}), (\ref{J1f}), (\ref{J2}) and sending $N\rightarrow \infty$, shows that, when $T\geq T_{0}$
\begin{equation}\label{3.14}
\bigg|N(T, \chi) - \frac{T}{\pi}\log\frac{kT}{2\pi e}\bigg| \leq \frac{r(\frac{1}{2} + \eta)}{2\pi\log r} \log kT + C_{2},
\end{equation}
where
\begin{equation*}
\begin{split}
C_{2} = \frac{2}{\pi}& \left\{\log\zeta(\tfrac{1}{2} + \sqrt{2}(\tfrac{1}{2} + \eta)) + g(1, T) + \frac{E}{2}\right\} + \frac{3}{2\log r}  \log\zeta(1+\eta)\\ &- \frac{\log \zeta(2+2\eta)}{\log r} + \frac{1}{2\pi\log r}\int_{-\pi/2}^{\pi/2} \log\zeta(1+ \eta + r(\tfrac{1}{2}+ \eta)\cos\phi)\, d\phi\\
& +\frac{r(\frac{1}{2} + \eta)}{4\pi\log r}\bigg\{ -2\log 2\pi + \int_{\pi/2}^{\pi} (-\cos\phi)\log w(T_{0}, \phi, \eta, r)\, d\phi \\
&\qquad\qquad\qquad\qquad\qquad\qquad+ \int_{\pi}^{3\pi/2} (-\cos\phi)\log w^{*}(T_{0}, \phi, \eta, r)\, d\phi\bigg\}. 
\end{split}
\end{equation*}

\subsection{A small improvement}\label{Asi}
Consider that what is really sought is a number $p$ satisfying $-\eta \leq p <0$ for which one can bound $L(p+it, \chi)$, provided that $1+ \eta - r(\frac{1}{2} + \eta)\geq p$. Indeed the restriction that $p\geq-\eta$ can be relaxed by adapting the convexity bound, but, as will be shown soon, this is unnecessary. 

The convexity bound (\ref{rconv}) becomes the rather ungainly
\begin{equation*}
|L(s, \chi)| \leq \left\{\left(\frac{k|1+ s|}{2\pi}\right)^{(1/2 - p)(1 + \eta - \sigma)} \zeta(1 - p)^{1 + \eta - \sigma} \zeta(1+ \eta)^{\sigma - p}\right\}^{1/(1 + \eta - p)},
\end{equation*}
valid for $-\eta \leq p \leq \sigma \leq 1 + \eta$. Such an alternation only changes $J_{2}$, whence the coefficient of $\log kT$ in (\ref{3.14}) becomes 
\begin{equation*}
\frac{r(\frac{1}{2} + \eta)(\frac{1}{2} - p)}{\pi(1+ \eta -p)\log r}.
\end{equation*}
This is minimised when $r = (1 + \eta - p)/(1/2 + \eta)$, whence (\ref{3.14}) becomes
\begin{equation}\label{3.15}
\bigg|N(T, \chi) - \frac{T}{\pi}\log\frac{kT}{2\pi e}\bigg| \leq \frac{\frac{1}{2} - p}{\pi\log \left(\frac{1+ \eta - p}{1/2 + \eta}\right)} \log kT + C_{2},
\end{equation}
where
\begin{equation}\label{C2f1}
\begin{split}
&C_{2} = \frac{2}{\pi} \left\{\log\zeta(\tfrac{1}{2} + \sqrt{2}(\tfrac{1}{2} + \eta)) + g(1, T) + \frac{G(0, \sqrt{2}(\frac{1}{2} + \eta), T_{0})}{2}\right\} \\
&+ \frac{1}{\log\left(\frac{1+ \eta - p}{1/2 + \eta}\right)}\bigg\{  \frac{3}{2}\log\zeta(1+\eta) 
- \log \zeta(2+2\eta) +\frac{1}{\pi}\log \frac{\zeta(1-p)}{\zeta(1+ \eta)}\\
& + \frac{1}{2\pi}\int_{-\pi/2}^{\pi/2} \log\zeta(1+ \eta + (1+ \eta - p)\cos\phi)\, d\phi 
+\frac{\frac{1}{2} - p}{2\pi}\bigg(-2\log 2\pi \\
&+ \int_{\pi/2}^{\pi} (-\cos\phi)\log w(T_{0}, \phi, \eta, r)\, d\phi + \int_{\pi}^{3\pi/2} (-\cos\phi)\log w^{*}(T_{0}, \phi, \eta, r)\, d\phi\bigg)\bigg\},
\end{split}
\end{equation}
in which $g(1, T)$, $G(a, \delta, T_{0})$, $w$ and $w^{*}$ are defined in (\ref{Storl}), (\ref{Gdef}), (\ref{wder}) and (\ref{wder2}).

The coefficient of $\log kT$ in (\ref{3.15}) is minimal when $p=0$ and $r = \frac{1 + \eta}{1/2 + \eta}$. One cannot choose $p=0$ nor should one choose $p$ to be too small a negative number lest the term $\log \zeta(1-p)/\zeta(1+ \eta)$ become too large. Choosing $p=-\eta/7$ ensures that $C_{2}$ in (\ref{3.15}) is always smaller than the corresponding term in McCurley's proof. Theorem \ref{McCurleyThm} follows upon taking $T_{0} = 1$ and $T_{0} = 10$. One could prove different bounds were one interested in `large' values of $kT$. In this instance the term $C_{2}$ is not so important, whence one could choose a smaller value of $p$.


\section{The Dedekind zeta-function}\label{zetasec}
This section employs the notation of \S\S \ref{Ds}-\ref{BT}. Consider a number field $K$ with degree $n_{K} = [K: \mathbb{Q}]$ and absolute discriminant $d_{K}$. In addition let $r_{1}$ and $r_{2}$ be the number of real and complex embeddings in $K$, whence $n_{K} = r_{1} + 2r_{2}$. Define the Dedekind zeta-function to be
\begin{equation*}
\zeta_{K}(s) = \sum_{\mathfrak{a} \subset \mathcal{O}_{K}} \frac{1}{(\mathbb{N}\mathfrak{a})^{s}},
\end{equation*}
where $\mathfrak{a}$ runs over the non-zero ideals. The completed zeta-function
\begin{equation}\label{feded}
\xi_{K}(s) = s(s-1) \left(\frac{d_{K}}{\pi^{n_{K}}2^{2r_{2}}}\right)^{s/2} \Gamma(s/2)^{r_{1}} \Gamma(s)^{r_{2}} \zeta_{K}(s)
\end{equation}
satisfies the functional equation
\begin{equation}\label{feeded}
\xi_{K}(s) = \xi_{K}(1-s).
\end{equation}
Let $a(s) = (s-1)\zeta_{K}(s)$ and let
\begin{equation}\label{fdefded}
f(\sigma) = \frac{1}{2}\left\{a(s+ iT)^{N} + a(s-iT)^{N}\right\}.
\end{equation}
It follows from (\ref{feded}) and (\ref{feeded}) that
\begin{equation}\label{DedEE}
\bigg|\Delta_{+} \arg a(s) + \Delta_{-} \arg a(s)\bigg| \leq F(\delta, t)+ n_{K} G(0, \delta, t),
\end{equation}
where $F(\delta, t) = 2\tan^{-1}\frac{1}{2t} - \tan^{-1} \frac{1/2 + \delta}{t} - \tan^{-1}\frac{1/2 - \delta}{t}$, and $G(0, \delta, t)$ is defined in (\ref{Gdef}).

Thus, following the arguments in \S\S \ref{Ds}-\ref{LemSec}, one arrives at
\begin{equation}\label{NDed}
\bigg|N_{K}(T) - \frac{T}{\pi}\log\left\{ d_{K} \left( \frac{T}{2\pi e}\right)^{n_{K}}\right\}\bigg| \leq \frac{2(n+1)}{N} + \frac{2n_{K}}{\pi}\left\{ |g(0, T)| + \log \zeta(\sigma_{1})\right\} + 2,
\end{equation}
where $n$ is bounded above by (\ref{jenco1}), in which $f(s)$ is defined in (\ref{fdefded}). Using the right inequality in
\begin{equation}\label{dedin}
\frac{\zeta_{K}(2\sigma)}{\zeta_{K}(\sigma)}\leq |\zeta_{K}(s)|\leq \left\{\zeta(\sigma)\right\}^{n_{K}},
\end{equation}
one can show that the corresponding estimate for $J_{1}$ is
\begin{equation}\label{JDedFin}
J_{1}/N \leq \pi\log T + \int_{-\pi/2}^{\pi/2} \left\{\log \tilde{w}(T, \phi, \eta, r) + n_{K} \log \zeta( 1 + \eta + r(\tfrac{1}{2} + \eta)\cos \phi)\right\}\, d\phi
\end{equation}
where
\begin{equation}\label{wdert}
\tilde{w}(T, \phi, \eta,r)^{2} =1+ \frac{2r(\frac{1}{2} + \eta)\sin\phi}{T} + \frac{r^{2}(\frac{1}{2} + \eta)^{2} + \eta^{2} + 2r\eta(\frac{1}{2} + \eta)\cos\phi}{T^{2}}.
\end{equation}
For $\phi\in[0, \pi/2]$, the function $\tilde{w}(T, \phi, \eta, r)$ is decreasing in $T$; for $\phi\in[-\pi/2, 0]$ it is bounded above by $\tilde{w}^{*}(T, \phi, \eta,r)$ where
\begin{equation}\label{wder2t}
\tilde{w}^{*}(T, \phi, \eta,r)^{2} =1+ \frac{r^{2}(\frac{1}{2} + \eta)^{2} + \eta^{2} + 2r\eta(\frac{1}{2} + \eta)\cos\phi}{T^{2}}.
\end{equation}
which is decreasing in $T$.

The integral $J_{2}$ is estimated using the following convexity result.
\begin{Lem}\label{convLem}
Let $-\eta\leq p<0$. For $p \leq 1+ \eta - r(\frac{1}{2} + \eta)$ the following bound holds
\begin{equation*}
\begin{split}
|a(s)|^{1 + \eta - p} \leq \left(\frac{1 -p}{1 +p}\right)^{1+ \eta - \sigma} &\zeta_{K}(1+ \eta)^{\sigma - p} \zeta_{K}(1-p)^{1 + \eta - \sigma} |1+ s|^{1 + \eta - p}\\
&\times  \left\{d_{K} \left(\frac{|1 + s|}{2\pi}\right)^{n_{K}}\right\}^{(1 + \eta - \sigma)(1/2 - p)}.
\end{split}
\end{equation*}
\end{Lem}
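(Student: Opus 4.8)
The plan is a Phragm\'en--Lindel\"of (three-lines) argument applied to the entire function $a(s)=(s-1)\zeta_K(s)$ in the vertical strip $p\le\Re s\le 1+\eta$. Here $a(s)$ is entire because the simple pole of $\zeta_K$ at $s=1$ is cancelled by the factor $s-1$, and it has order $1$, so its growth is compatible with a convexity argument. I would verify the asserted inequality on the two bounding lines $\Re s=p$ and $\Re s=1+\eta$ and then interpolate: the exponents $\sigma-p$ and $(1+\eta-\sigma)(1/2-p)$ appearing on the right-hand side are linear in $\sigma=\Re s$, so, after replacing $\sigma$ by $s$ in those exponents (which changes the majorant only by a unimodular factor $e^{it\cdot\mathrm{const}}$ and hence does not affect moduli on vertical lines), the right-hand side of the claimed bound becomes the modulus of an explicit holomorphic function, and the three-lines theorem yields the inequality throughout the strip.

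On the right edge $\Re s=1+\eta>1$ the estimate is immediate: every term of the Dirichlet series $\zeta_K(s)=\sum_{\mathfrak a}(\mathbb N\mathfrak a)^{-s}$ has modulus $(\mathbb N\mathfrak a)^{-(1+\eta)}$, so $|\zeta_K(s)|\le\zeta_K(1+\eta)$ (the right-hand inequality of (\ref{dedin}) in its sharper form, with $\zeta_K(1+\eta)$ in place of $\zeta(1+\eta)^{n_K}$), and $|s-1|\le|s+1|$ for $\Re s\ge0$; hence $|a(s)|\le|1+s|\,\zeta_K(1+\eta)$. This is exactly the claimed bound on $\sigma=1+\eta$, where the factors $\big(\tfrac{1-p}{1+p}\big)^{1+\eta-\sigma}$, $\zeta_K(1-p)^{1+\eta-\sigma}$ and $\{d_K(|1+s|/2\pi)^{n_K}\}^{(1+\eta-\sigma)(1/2-p)}$ all equal $1$.

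On the left edge $\Re s=p<0$, so $\Re(1-s)=1-p>1$, I would use the functional equation. Substituting (\ref{feded}) into (\ref{feeded}), the polynomial prefactors $s(s-1)$ agree on both sides and cancel, leaving
\[
\zeta_K(s)=\Big(\frac{d_K}{\pi^{n_K}2^{2r_2}}\Big)^{\frac{1-2s}{2}}\,\frac{\Gamma\big(\tfrac{1-s}{2}\big)^{r_1}\Gamma(1-s)^{r_2}}{\Gamma\big(\tfrac{s}{2}\big)^{r_1}\Gamma(s)^{r_2}}\,\zeta_K(1-s).
\]
Then $|\zeta_K(1-s)|\le\zeta_K(1-p)$ by the Dirichlet series, $|s-1|\le\tfrac{1-p}{1+p}|s+1|$ (on $\Re s=p$ the ratio $|s-1|/|s+1|$ is maximised on the real axis), and the archimedean ratio is estimated by Stirling's formula (\ref{Stirling}): the shifts in the $r_1$ factors $\Gamma(\tfrac{1-s}{2})/\Gamma(\tfrac{s}{2})$ and the $r_2$ factors $\Gamma(1-s)/\Gamma(s)$ have real parts $\tfrac12-\sigma$ and $1-2\sigma$, which combine to a total exponent $r_1(\tfrac12-\sigma)+r_2(1-2\sigma)=n_K(\tfrac12-\sigma)$, so the ratio is bounded by an explicit constant times $(|1+s|/2\pi)^{n_K(1/2-p)}$, with the $(2\pi)^{n_K}$ and the $d_K^{1/2-p}$ fitting together precisely into $\{d_K(|1+s|/2\pi)^{n_K}\}^{1/2-p}$ and the factor $2^{2r_2}\ge1$ in the denominator only helping. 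This gives $|a(s)|\le\tfrac{1-p}{1+p}\,\zeta_K(1-p)\,|1+s|\,\{d_K(|1+s|/2\pi)^{n_K}\}^{1/2-p}$, which is the claimed bound on $\sigma=p$ (there $\sigma-p=0$).

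The main obstacle is this archimedean estimate on the left edge: one must control the product of gamma quotients with explicit, not merely asymptotic, constants, so that it yields exactly $|1+s|$ (rather than $|s|$ or $|s-1|$) and exactly the $2\pi$ of the statement, uniformly for all $\Re s$ between $p$ and $1+\eta$ — including those $s$ for which $s/2$ or $s$ sits near a pole of $\Gamma$, where the corresponding trivial zeros of $\zeta_K$ keep $a(s)$ regular but a little care is needed to keep the bound clean. This is precisely the Dedekind analogue of Rademacher's convexity bound (\ref{rconv}) for Dirichlet $L$-functions \cite{Rademacher}, carried out by the same method; the remaining step, rewriting the $\sigma$-linear exponents as a genuine holomorphic comparison function so that the three-lines theorem applies, is routine.
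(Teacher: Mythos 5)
Your plan is the right one, and it is in fact the same route the paper takes: the paper's entire proof of Lemma~\ref{convLem} is the citation ``See \cite[\S 7]{Rademacher}; when $p=-\eta$ the bound reduces to \cite[Thm 4]{Rademacher},'' and Rademacher's \S 7 is exactly the Phragm\'en--Lindel\"of argument you describe (verify on $\Re s = 1+\eta$ via the Dirichlet series and $|s-1|\le|s+1|$; verify on $\Re s = p$ via the functional equation, $|s-1|\le\frac{1-p}{1+p}|s+1|$, and explicit Stirling; interpolate). Your two boundary computations are correct, including the bookkeeping $r_1(\tfrac12-\sigma)+r_2(1-2\sigma)=n_K(\tfrac12-\sigma)$, the role of $2^{2r_2}$ in producing the $(2\pi)^{n_K}$, and the observation that the right edge collapses to $|a(s)|\le|1+s|\,\zeta_K(1+\eta)$ because every $(1+\eta-\sigma)$-exponent vanishes there.

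The one place where the proposal, as written, would not quite go through is the interpolation step. You assert that after replacing $\sigma$ by $s$ in the $\sigma$-linear exponents the right-hand side ``becomes the modulus of an explicit holomorphic function,'' the discrepancy being ``only a unimodular factor $e^{it\cdot\mathrm{const}}$.'' That is true for the factors with constant base, such as $\zeta_K(1+\eta)^{\sigma-p}=|\zeta_K(1+\eta)^{s-p}|$, but it fails for the factor $|1+s|^{(1+\eta-\sigma)(1/2-p)}$: writing $g(s)=(1+s)^{c(1+\eta-s)}$ with $c=n_K(\tfrac12-p)$ one finds $|g(s)|=|1+s|^{c(1+\eta-\sigma)}\,e^{\,c\,t\arg(1+s)}$, and the extra factor $e^{\,c\,t\arg(1+s)}$ is real and generically far from $1$ (already on the edge $\sigma=1+\eta$, where you would want $|g|\equiv 1$, one has $|g(1+\eta+it)|=e^{ct\arg(2+\eta+it)}\to\infty$). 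So there is no naive holomorphic comparison function whose modulus equals the claimed bound on vertical lines, and the bare three-lines theorem does not apply. The correct tool is the refined Phragm\'en--Lindel\"of theorem that Rademacher proves precisely for this purpose (his Theorems 1--2), which interpolates majorants of the form $A\,|Q+s|^{\alpha}$ on the two boundary lines via a nontrivial auxiliary-function argument; this is a genuine lemma, not a routine reduction. With that result in hand the rest of your proof goes through and matches the cited source.
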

\begin{proof}
See \cite[\S 7]{Rademacher}. When $p = -\eta$ the bound reduces to that in \cite[Thm 4]{Rademacher}.
\end{proof}
Using this it is straightforward to show that
\begin{equation}\label{FinJ2}
\begin{split}
J_{2}/N &\leq \frac{2r(\frac{1}{2} + \eta)}{1 + \eta - p} \left\{ \log \frac{\zeta_{K}(1 -p)}{\zeta_{K}(1 + \eta)} + \log \frac{1 - p}{1 +p} + (1/2 - p)\log \frac{d_{K}}{(2\pi)^{n_{K}}}\right\} \\
& + \pi \log\zeta_{K}(1 + \eta) + \log T\left(\pi + \frac{2rn_{K}(\frac{1}{2} + \eta)(\frac{1}{2} - p)}{1 + \eta - p}\right)\\
& + \int_{\pi/2}^{3\pi/2} \log w(T_{0}, \phi, \eta, r)\left(1 + \frac{n_{K}r(\frac{1}{2} + \eta)(\frac{1}{2} - p)(-\cos\phi)}{1 + \eta - p}\right)\, d\phi\\
\end{split}
\end{equation}
The quotient of Dedekind zeta-functions can be dispatched easily enough using
\begin{equation*}
-\frac{\zeta_{K}'}{\zeta_{K}} (\sigma) \leq n_{K}\left\{ -\frac{\zeta'}{\zeta}(\sigma)\right\}
\end{equation*}
to show that
\begin{equation*}
\log\frac{\zeta_{K}(1-p)}{\zeta_{K}(1 + \eta)}  = \int_{1-p}^{1 + \eta} -\frac{\zeta_{K}'}{\zeta_{K}}(\sigma)\, d\sigma \leq n_{K}\int_{1-p}^{1 + \eta} -\frac{\zeta'}{\zeta}(\sigma)\, d\sigma \leq n_{K} \log\frac{\zeta(1-p)}{\zeta(1 + \eta)}.
\end{equation*}
Finally the term $-\log|f(1 + \eta)|$ is estimated as in the Dirichlet $L$-function case --- cf.\ (\ref{Rost}). This shows that 
\begin{equation*}\label{RosDed}
\log|f(1+ \eta)| \geq N\log\frac{\zeta_{K}(2 + 2\eta)}{\zeta_{K}(1 + \eta)} + \frac{N}{2}\log(\eta^{2} + T^{2}) + o(1).
\end{equation*}
This, together with (\ref{NDed}), (\ref{JDedFin}), (\ref{wdert}), (\ref{wder2t}) and (\ref{FinJ2}) and sending $N\rightarrow\infty,$ shows that, when $T\geq T_{0}$,
\begin{equation}\label{FinalD}
\begin{split}
\bigg|N_{K}(T) - \frac{T}{\pi}\log\left\{ d_{K} \left( \frac{T}{2\pi e}\right)^{n_{K}}\right\}\bigg| &\leq \frac{r(\frac{1}{2} + \eta)(\frac{1}{2} - p)}{\pi\log r(1 + \eta - p)}\left\{ \log d_{K} + n_{K}\log T\right\}\\
& + \left(C_{2} -\frac{2}{\pi}\left[g(1, T) - |g(0, T)|\right]\right)n_{K} + D_{3},
\end{split}
\end{equation}
where $C_{2}$ is given in (\ref{C2f1}) and
\begin{equation}\label{FinalD2}
\begin{split}
D_{3} &= 2 + \frac{r(\frac{1}{2} + \eta)}{\pi\log r (1 + \eta - p)} \log \left(\frac{1 -p}{1 +p}\right) + \frac{1}{\pi}F( \sqrt{2}(\tfrac{1}{2} + \eta), T_{0}) \\
&+ \frac{1}{2\pi \log r}\bigg( \int_{-\pi/2}^{0} \log \tilde{w}^{*}(T_{0}, \phi, \eta, r) \, d\phi + \int_{0}^{\pi/2} \log \tilde{w}(T_{0}, \phi, \eta, r) \, d\phi\\
&\qquad\qquad\qquad+ \int_{\pi/2}^{\pi} \log w(T_{0}, \phi, \eta, r) \, d\phi + \int_{\pi}^{3\pi/2} \log w^{*}(T_{0}, \phi, \eta, r) \, d\phi\bigg)
\end{split}
\end{equation}
If one chooses $p = -\eta/7$, to ensure that the lower order terms in (\ref{FinalD}) are smaller than those in \cite{NgKad2012}, one arrives at Theorem \ref{ZetaThm}. One may choose a smaller value of $p$ if one is less concerned about the term $D_{2}$.



\section*{Acknowledgements}
I should like to thank Professor Heath-Brown and Professors Ng and Kadiri for their advice. I should also like to thank Professor Giuseppe Molteni and the referee for some constructive remarks.

\bibliographystyle{plain}
\bibliography{themastercanada}

\end{document}